\documentclass{amsart}

\usepackage
  { amsthm
  , amssymb
  , amsmath
  , amsrefs
  , MnSymbol
  , aliascnt
  , hyperref
  , tikz
  , xcolor
  , pgfplots
  }
\usepackage{theorems}

\NewDocumentCommand\normclsr{m}{\left\llangle #1\right\rrangle}
\NewDocumentCommand\fd{mm}{\frac{\partial #1}{\partial #2}}
\NewDocumentCommand\comm{m}{\left[#1,#1\right]}
\NewDocumentCommand\Q{}{\mathbb Q}
\DeclareMathOperator{\Aut}{Aut}
\DeclareMathOperator{\Ab}{Ab}
\title{More residually solvable one-relator groups}
\author[]{Lucy Koch-Hyde}
\address{Department of Mathematics, CUNY Graduate Center, New York,
NY 10016}  \email{Lucy.Koch-Hyde96@gc.cuny.edu}
\author[]{Siobh\'an O'Connor}
\address{Department of Mathematics, CUNY Graduate Center, New York,
NY 10016}  \email{Siobhan.OConnor75@gc.cuny.edu}
\author[]{\'Eamonn Olive}
\email{ejolive97@gmail.com}
\date{June 2026}

\begin{document}

\begin{abstract}
    We say a word in a free group is ``partially positive'' if there is at least one generator which appears in the word, but does not appear with negative exponent.
    We show that one-relator groups with a partially positive relator are residually solvable.
    This extends a theorem of Baumslag showing one-relator groups with positive relators are residually solvable.
\end{abstract}

\maketitle

\section{Introduction}

We will begin with the seminal theorem of Baumslag:

\begin{theorem}[\cite{baumslag_positive_1971}*{Thm.\ 1}]\label{thm:Baumslag}
    Let $F_X$ be a free group.
    If $w$ is a word in the ``positive submonoid'' of $F_X$, i.e.\ the submonoid generated by $X$ (and not $X^{-1}$), then the group $G=F_X/\normclsr{w} = \left\langle X\mid w=1\right\rangle$ is residually solvable.
\end{theorem}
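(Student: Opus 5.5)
The plan is to follow Baumslag's original strategy: realise $G$ as an HNN extension, or as a subgroup of one, via the Magnus--Moldavanskii rewriting, and then use a residual property of the relevant free-by-cyclic or free-by-abelian structure.

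\textbf{Reductions.} We may assume $X$ is finite, since $w$ involves only finitely many generators and $G$ is a free product of a finitely generated one-relator group with a free group. Free products of residually solvable groups are residually solvable, which follows from the Gruenberg-type result that free products of residually $\mathcal P$ groups are residually $\mathcal P$ for root classes, applied to solvable quotients via the derived series. We may also assume $|X|\ge 2$ and that $w$ is cyclically reduced. If $|X|=1$, then $G$ is cyclic, and if $w$ is empty, $G$ is free; both are residually solvable.

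\textbf{Killing the abelianisation.} Every generator appears in $w$ with positive total exponent $e_x>0$. Consider the homomorphism $\phi\colon F_X\to\mathbb Z$ sending each generator $x$ to $\prod_{y\neq x} e_y$, suitably normalised so that $\phi(w)=0$. More simply, pick two generators $a,b$ and set $\phi(a)=e_b$, $\phi(b)=-e_a$, and $\phi=0$ on the others. Then $\phi(w)=0$, so $\phi$ factors through $G$. Let $N=\ker(G\to\mathbb Z)$.

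Magnus rewriting shows that $N$ is an infinite amalgam, and in fact the directed union of finitely generated one-relator groups $\langle x_{i},\dots\mid w_i\rangle$. This gives a tower $G=H\ast_{\psi}$ in which the base group is a one-relator group with a shorter relator. Positivity is preserved by the rewriting: each letter $x$ in $w$ is replaced by a conjugate $t^{k}xt^{-k}$, which is rewritten as the letter $x_k$ with exponent $+1$. So the rewritten relator is again positive.

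\textbf{Induction.} By induction on the length of $w$, the one-relator groups $H$ appearing in the tower are residually solvable. Baumslag's key point is stronger: when $w$ is positive, the rewritten relator is positive in the extremal letters $x_{m}, x_{M}$ (those of minimal and maximal subscript). By the Freiheitssatz, the associated subgroups are then free factors of $H$ modulo a torsion-free abelian quotient. More precisely, the positivity of $w$ makes the exponent-sum map $G\to\mathbb Z$ with all generators sent to $1$ have kernel $N$ such that $G/N'$ is torsion-free metabelian and $N/N'$ is a free $\mathbb Z[t^{\pm1}]$-module modulo one relation. Baumslag's key lemma is that $\bigcap_n G^{(n)}$ is trivial because $N$ is residually a free group in a way compatible with the $t$-action. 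Concretely, $N$ is a union of free groups $N_k$ (positive one-relator groups on distinct extremal letters are free on the non-extremal ones), and the conjugation by $t$ acts on $N/\gamma$-type quotients unipotently mod $p$.

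\textbf{Main obstacle.} The hard step is to show that the ascending union $N=\bigcup N_k$ of free groups yields residual solvability of $G=N\rtimes\mathbb Z$. Here I would use the following argument. Each element $g\ne1$ lies in some $N_k$, which is free. The quotients $N_k/N_k^{(m)}$ are solvable. One needs that the derived series of $N$ restricts well to $N_k$, that is, $N_k\cap N^{(m)}\subseteq N_k^{(m')}$, where positivity ensures $N_k$ is a free factor of $N_{k+1}$. Given this, $N^{(m)}\cap N_k=N_k^{(m)}$, so $N$ is residually solvable, and then $G$, being (residually solvable)-by-cyclic with $G^{(m+1)}\subseteq N^{(m)}$, is residually solvable. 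I expect the verification that $N_k$ is a free factor of $N_{k+1}$, which relies on the rewritten relator being positive and hence involving the new extremal letter exactly as a primitive-like element, to be the main technical point.
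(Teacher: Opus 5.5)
Your proposal has a genuine gap, and it sits exactly at the step you flag as the main technical point. The claim that $N=\ker(G\to\mathbb{Z})$ is an ascending union of free groups $N_k$, each a free factor of the next, is false in general, so it cannot be verified. If it held, $N$ would be free and every positive one-relator group would be free-by-cyclic, hence torsion-free. But for $w=(ab)^2$ you get $G\cong\mathbb{Z}*\mathbb{Z}/2$ (put $c=ab$). Every homomorphism $\phi\colon G\to\mathbb{Z}$ satisfies $2\phi(ab)=0$, so the order-two element $ab$ lies in $N$. The same happens for every proper power $w=u^k$ with $k\ge 2$.

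Your parenthetical justification also fails without torsion. The Freiheitssatz says only that the non-extremal letters generate a free \emph{subgroup}; it does not say the one-relator group is free on them. For example, $\langle x_0,x_1,x_2\mid x_0^2x_1x_2^2x_1\rangle$ is positive and not a proper power, yet its abelianisation is $\mathbb{Z}^2\oplus\mathbb{Z}/2$. In the Magnus--Moldavanskii hierarchy the pieces of $N$ are one-relator groups amalgamated along Magnus subgroups, and positivity does not force those Magnus subgroups to be free factors.

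There are also smaller errors. The ``exponent-sum map with all generators sent to $1$'' does not factor through $G$, since it sends $w$ to $|w|\neq 0$. The induction step proves nothing, because residual solvability of the base group does not pass to an HNN extension. For instance, the Baumslag--Gersten group $\langle a,t\mid (tat^{-1})a(tat^{-1})^{-1}=a^2\rangle$ is an HNN extension of the metabelian group $BS(1,2)$, yet its derived subgroup is perfect and nontrivial.

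What is missing is an argument that does not need $N$ to be free and that tolerates torsion. The paper obtains the statement as the special case of \autoref{thm:real main}, since a nonempty positive word is partially positive. \autoref{thm:linton main} reduces residual $\Q$-solvability to showing that any $r$ with $w\in r^k\comm{\normclsr{r}}$ is conjugate to $w^{\pm1}$. This is proved with Fox derivatives: positivity of the coefficients of $\partial w/\partial a$, Weinbaum's theorem, and matching terms by $a$-exponent modulo $|r|_a$ followed by the Freiheitssatz. Proper powers are then handled by \autoref{thm:linton torsion}, which yields free-by-solvable rather than free-by-cyclic. That weakening is exactly what your approach would need in the torsion case but cannot produce.
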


We will define residual solvability, but the internal mechanics are of less importance than the fact it is an isomorphism invariant.

\begin{definition}
    A group $G$ is residually solvable if its derived subgroups converge to the trivial group.
    That is,
    \[
    \{1\} = G^{(\omega)} :=\bigcap^\infty_{i=0} G^{(i)}
    \]
    where $G^{(0)}:=G$ and $G^{(i+1)}:=\comm{G^{(i)}}$.
\end{definition}

What one ought to take away is that residual solvability is a condition about commutator subgroups (and so are solvability and some related notions we will explore later).

If we want to know how strong an isomorphism invariant is, a good question to ask is how probable it is that two random groups are distinguished by the isomorphism invariant.
To know this we can ask what the probability that a random group is residually solvable is.

\begin{definition}
    Let $\mathbb P_r^n(P)$ be the probability that for a uniformly sampled word $w$ with $|w|\leq n$, $F_r/\normclsr{w}$ has property $P$.
    We define $\mathbb P_r^\infty(P):=\lim_{n\rightarrow \infty}\mathbb P_r^n(P)$.
\end{definition}

This of course sets us up for the question:

\begin{question}\label{q:rsprob}
    What is the probability that a random one-relator group is residually solvable?
\end{question}

This question is a more general version of a question in \cite{bencsath_residually_2010} which asks simply whether $\mathbb P_r^\infty(\text{residual solvability})=1$.

Now, \autoref{thm:Baumslag} tells us that positive one-relator groups are residually solvable, so it gives us a lower bound on the probability function $\mathbb P_r^n(\text{residually solvable})$.
The number of positive words grows as $r^n$, and the total number of reduced words grows as $2r(2r-1)^n$, meaning that the probability that a random word is positive converges to zero exponentially fast (so long as the rank is at least 2, which we will generally assume from now on).

One can expand the scope of \autoref{thm:Baumslag} using a trick.
Applying a free group automorphism to the relator yields an isomorphic group, so it preserves isomorphism invariants.
Motivated by this fact, we define potentially positive words to be those which are automorphic to positive words.
Potentially positive words were introduced in the 2002 edition of \cite{kapovich_combinatorial_2026}.
In the current edition, they are addressed in problem (F33):

\begin{question}[\cite{kapovich_combinatorial_2026}*{(F33)}]
    Let potential positivity be defined as above.
    \begin{enumerate}
        \item[(a)] Is the property of being potentially positive algorithmically recognizable?
        \item[(c)]\label{pp growth} Let $P(r,n)$ denote the number of potentially positive elements of length $n$ in $F_r$. What is the growth of $P(r,n)$ as a function of $n$, with $r$ fixed?
    \end{enumerate}
\end{question}

Goldstein first provided an algorithm in rank 2 in \cite{goldstein_algorithm_2006}.
\cite{koch-hyde_complexity_nodate} and \cite{oconnor_algorithmic_2026} improve its time complexity, ultimately giving a linear time algorithm.%
\footnote{
Asymptotic complexity can be thought of as an objective measure of descriptive power. 
The faster or more space efficient an algorithm, the more you have said about a problem.
}
\cite{dinowitz_growth_2025} uses Goldstein's description of potentially positive elements to solve \hyperref[pp growth]{(c)} for $F_2$.
Scant progress has been made on these problems in ranks higher than 2.

While we do not have a full description of potential positivity in ranks 3 and up, we can still make observations about this property which expand the scope of \autoref{thm:Baumslag}.
One such observation is the following:
\begin{observation}\label{obs:obv}
    Any word $w\in F_r$ in which all but one generator appears positively is potentially positive.
\end{observation}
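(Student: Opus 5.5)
The plan is to write down an explicit automorphism of $F_r$ that sends $w$ to a positive word. Name the generators $x, y_1, \dots, y_{r-1}$, where $x$ is the one generator allowed to appear with negative exponent. By hypothesis each $y_i$ appears in $w$ only with positive exponent.

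First dispose of the degenerate case where no $y_i$ occurs in $w$. Then $w = x^k$ for some $k \in \mathbb Z$. If $k \geq 0$ the word is already positive. If $k < 0$, the automorphism $x \mapsto x^{-1}$ (fixing every $y_i$) makes it positive.

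Otherwise, write the reduced word as
\[
w = x^{a_0} y_{i_1} x^{a_1} y_{i_2} \cdots y_{i_m} x^{a_m}, \qquad m \geq 1,\ a_j \in \mathbb Z,
\]
which is possible precisely because every $y$-letter has exponent $+1$. Fix $N \geq \max_j |a_j|$. Consider the map $\phi$ fixing $x$ and sending $y_i \mapsto x^N y_i x^N$ for every $i$. It is an automorphism: it is the composite of the Nielsen moves $y_i \mapsto x^N y_i$ and $y_i \mapsto y_i x^N$ applied to each $y_i$, and its inverse is $y_i \mapsto x^{-N} y_i x^{-N}$.

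Applying $\phi$ gives
\[
\phi(w) = x^{a_0+N} y_{i_1} x^{a_1+2N} y_{i_2} \cdots y_{i_m} x^{a_m+N}.
\]
Every $x$-exponent is now nonnegative by the choice of $N$, and the $y$-letters are still positive. Hence $\phi(w)$ lies in the positive submonoid, so $w$ is potentially positive.

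The only point needing care is to apply the substitution to all $y_i$ simultaneously, so that every interior $x$-block gains exactly $2N$ and each end block gains $N$. After that the argument is a routine check; I do not expect any real obstacle.
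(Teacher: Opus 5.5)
Your proof is correct and complete. The map $y_i\mapsto x^N y_i x^N$ (fixing $x$) is an automorphism with inverse $y_i\mapsto x^{-N}y_ix^{-N}$. It raises each interior $x$-exponent by $2N$ and the two end exponents by $N$, so every exponent becomes nonnegative. You are also right to treat $w=x^k$ separately, since this map fixes $x$ and cannot help there.

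The paper records this as an observation and gives no proof, so there is no argument to compare against. Your explicit Nielsen substitution is the natural argument. It is the same kind of map the paper itself uses later in its 3P example, $c\mapsto a^2ca$.
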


One could hope to expand the scope even further.

\begin{question}[Vladimir Shpilrain, personal communication]\label{q:Shpilrain's question}
    A word $w\in F_X$ is {\it partially positive} if there is some $a\in X$ such that $w$ can be written with only positive exponents on $a$, and $a$ appears at least once in $w$.
    Is there a partially positive word which is not potentially positive?
\end{question}

Restricted to rank 2 the answer is ``no'' per \autoref{obs:obv}, however, for higher ranks the answer seems almost certainly to be ``yes''.
Nevertheless, the question is open.
One way we might try to resolve this is to turn \autoref{thm:Baumslag} on its head.
We can look for partially positive words which relate non-residually solvable groups.

However, there is an obstacle to producing such examples: they do not exist.

\begin{theorem}[Main result]
    Let $F_X$ be a free group.
    If $w\in F_X$ is a partially positive word then the group $G=F_X/\normclsr{w} = \left\langle X\mid w=1\right\rangle$ is residually solvable.
\end{theorem}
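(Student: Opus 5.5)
The plan is to reduce to a situation where the distinguished generator $a$ can be eliminated, and then control the derived series of the resulting subgroup. Fix $a\in X$ as in the definition of partial positivity, and let $k\ge 1$ be the number of occurrences of $a$ in $w$. After a cyclic permutation, which replaces $G$ by an isomorphic group, I would write
\[
w = a\,u_1\,a\,u_2\cdots a\,u_k ,
\]
where each $u_i$ is a (possibly empty) word in $X\setminus\{a\}$. All exponents on $a$ are $+1$, and this is exactly where partial positivity enters.

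First I would use the epimorphism $\varphi\colon G\to \mathbb Z/k$ sending $a\mapsto 1$ and every other generator to $0$. It is well defined because the exponent sum of $a$ in $w$ is $k$. Let $K=\ker\varphi$. Since $G/K$ is abelian we have $G'\le K$, and hence $G^{(i+1)}\le K^{(i)}$ for all $i$. So $G^{(\omega)}\le K^{(\omega)}$, and it suffices to prove that $K$ is residually solvable.

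Next I would apply Reidemeister--Schreier with transversal $1,a,\dots,a^{k-1}$. The generators of $K$ are $t=a^k$ and $x_i=a^ixa^{-i}$ for $x\in X\setminus\{a\}$ and $0\le i<k$. The relators are the rewrites $r_j$ of $a^jwa^{-j}$ for $0\le j<k$. Because the $a$'s in $w$ are all positive and there are exactly $k$ of them, the coset index climbs from $j$ back to $j$ passing through $k-1\to 0$ exactly once. So each $r_j$ contains $t$ exactly once, with exponent $+1$, and the remaining letters are $x_i$'s in a cyclic order. Solving each $r_j$ for $t$ gives $t=v_j$, where $v_j$ is a word in the $x_i$ (the rewrite of a cyclic shift of $w$ with indices shifted by $j$). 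This yields
\[
K\cong\bigl\langle\, x_i \;(x\in X\setminus\{a\},\,0\le i<k)\ \bigm|\ v_0=v_1=\cdots=v_{k-1}\,\bigr\rangle .
\]
Moreover, $\varphi$ lifts to the map $G\to\mathbb Z$ only in the sense that $a$ acts on $K$ by the index-shift automorphism $x_i\mapsto x_{i+1}$, with $x_{k-1}\mapsto t x_0 t^{-1}$.

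The main obstacle is proving that this $K$, or an analogous structure, is residually solvable. The approach I would try first is to mimic Baumslag's argument for \autoref{thm:Baumslag}, exploiting the fact that the defining relations all identify words with the same ``shape''. The $v_j$ differ only by a cyclic rotation of the blocks $u_1,\dots,u_k$ combined with an index shift. This suggests passing to the ascending tower $K\ge K'\ge\cdots$ and showing at each stage that the abelianised relations are independent enough for the relation module to embed, in the style of Magnus's argument. Failing that, I would try to realise $K$, or a finite-index subgroup of it, as an amalgam or HNN extension of free groups along cyclic subgroups generated by the $v_j$, where residual solvability can be checked by hand. The key point to establish in either route is that a nontrivial element of $K$ survives in some $K/K^{(n)}$. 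I expect this step to require the bulk of the work, whereas the reductions above are routine.
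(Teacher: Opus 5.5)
Your reductions are correct, but they do not actually reduce the problem. Since $K\le G$ you also have $K^{(i)}\le G^{(i)}$. Together with $G^{(i+1)}\le K^{(i)}$ this gives $K^{(\omega)}=G^{(\omega)}$ exactly. So ``$K$ is residually solvable'' is just a restatement of the theorem, now for a group with $k-1$ relators. Once $k\ge 3$, the one-relator tools (the Freiheitssatz, Weinbaum's theorem, Linton's criterion) are no longer directly available for such a group.

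The step you defer as ``the main obstacle'' is therefore the entire proof, and neither suggested route comes with a mechanism. You do not say why the relations $v_0=\cdots=v_{k-1}$ would make a relation module embed at every stage of the derived series. Nor do you say why $K$ should split as a graph of free groups over the cyclic subgroups $\langle v_j\rangle$. In general the $v_j$ share generators $x_i$, so no such splitting is evident, and even if one existed you would still have to prove residual solvability of the resulting graph of groups. Above all, the outline never says where partial positivity would be used beyond the bookkeeping fact that $t$ occurs exactly once in each $r_j$.

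The missing idea is a criterion that turns residual solvability of a one-relator group into a checkable condition on the relator, plus a way to feed the positivity of $a$ into it. The paper uses Linton's theorem (\autoref{thm:linton main}): $F/\normclsr{w}$ is residually $\Q$-solvable if and only if, whenever $w\in r^k\comm{\normclsr{r}}$ with $|r|\le|w|$ and $k\ge1$, $w$ is conjugate to $r^{\pm1}$.

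The paper verifies this condition with Fox calculus modulo $\normclsr{r}$. Write $r=qr'q^{-1}$ with $r'$ cyclically reduced, and let $\varphi_r\colon F\to F/\normclsr{r}$ be the quotient map. Then $\varphi_r(\fd{w}{a})=k\,\varphi_r(q\fd{r'}{a})$. Since $a$ occurs only positively in $w$, the left side has only nonnegative coefficients. So a negative term coming from an $a^{-1}$ in $r'$ would have to cancel against a positive term. That would produce a proper nonempty subword of $r'$ that is trivial in $F/\normclsr{r}$, contradicting Weinbaum's theorem. Hence $r'$ is $a$-positive (\autoref{lem:rispp}).

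Next, match the $nk$ terms of $\fd{w}{a}$ against those of $\fd{r'}{a}$ by $a$-exponent sum modulo $n=|r'|_a$, and apply the Freiheitssatz to the $a$-free blocks. This forces a cyclic permutation of $w$ to equal $(r')^k$, so $k=1$ when $w$ is not a proper power (\autoref{lem:w=r}). Proper powers are then handled by \autoref{thm:linton torsion}, which gives free-by-solvable.

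Nothing in your outline plays the role of this criterion. Without it, or a genuine proof that $K$ is residually solvable, your argument stops exactly where the content of the theorem begins.
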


While we fail to answer \autoref{q:Shpilrain's question}, we have a pretty extensive generalization of \hyperref[thm:Baumslag]{Baumslag's result}.

\section{Methods}

As is often the case, we will prove a slightly stronger theorem than our stated main result.

\begin{theorem}\label{thm:real main}
    Let $F_X$ be a free group
    and $w\in F_X$ be a partially positive word.
    \begin{itemize}
        \item If $w$ is not a proper power, then $F_X/\normclsr{w}$ is residually $\Q$-solvable.
        \item If $w$ is a proper power, then $F_X/\normclsr{w}$ is free-by-solvable.
    \end{itemize}
\end{theorem}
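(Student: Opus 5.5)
The plan is to follow the skeleton of Baumslag's proof of \autoref{thm:Baumslag}. The idea is to replace full positivity by positivity in the single letter $a$, and to use $a$ to build an HNN/Magnus--Moldavanskii decomposition of the relevant kernel.

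\textbf{Step 1 (a homomorphism to $\mathbb Z$).} Fix $a\in X$ witnessing partial positivity, and let $\sigma_x$ denote the exponent sum of $x$ in $w$. Then $\sigma_a\geq 1$. If some other generator $b$ has $\sigma_b\neq 0$, apply the automorphism $b\mapsto b a^{k}$ to $w$; this is harmless because residual $\Q$-solvability is an isomorphism invariant. Choosing $k$ appropriately should arrange $\sigma_b=0$ for every $b\neq a$, or failing that should produce a homomorphism $\psi\colon F_X\to\mathbb Z$ with $\psi(w)=0$ and $\psi(b)\neq 0$ for some $b\neq a$. I must check that the substitution keeps $a$ appearing only positively. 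Replacing $b$ by $ba^{k}$ with $k\geq 0$ does keep $a$ positive where $b$ occurs positively, but $b^{-1}$ contributes $a^{-k}$. So I expect instead to keep $w$ as is and pick $\psi$ on the other generators directly, setting $\psi(a)=0$ only when that is compatible with $\psi(w)=0$, and otherwise using $a$ itself as the letter of nonzero weight.

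\textbf{Step 2 (rewriting the kernel).} Form $N=\ker(G\to\mathbb Z)$ and rewrite $w$ over the conjugates $x_i=t^i x t^{-i}$ in the Magnus--Moldavanskii way. The rewritten relator $\tilde w$ involves only finitely many indices. The key claim is the following: because $a$ occurs only positively, the letters $a_i$ of extremal index in $\tilde w$ occur only positively as well. I then want to show that $N$ is an ascending union of one-relator groups $\langle x_i,\dots\mid \tilde w_j\rangle$ in which each successive inclusion is the inclusion of a free factor, or at least of a Magnus subgroup. The next step is to pass to the metabelian, and then higher, quotients. The aim is to show that the derived series of $G$ eventually lands in a subgroup that is locally free and residually torsion-free solvable, and then to intersect over the terms of that series. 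This is the main obstacle. In Baumslag's setting positivity gives ascending HNN extensions of free groups, whereas here only one letter is controlled. What I would try first is an induction on $|w|$ in the style of Magnus's Freiheitssatz argument. The inductive hypothesis is applied to $\tilde w$, which is again partially positive (in an extremal $a_i$) and shorter. The conclusion to carry through the induction is that $G$ is residually $\Q$-solvable, with torsion-free solvable quotients constructed compatibly with the HNN structure.

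\textbf{Step 3 (proper powers).} Write $w=u^{k}$ with $u$ not a proper power. Then $u$ is itself partially positive, since the exponents on $a$ in $u$ are those in $w$. By Step 2, $H=\langle X\mid u\rangle$ is residually $\Q$-solvable and, from the construction, has a torsion-free solvable quotient $S$ in which $u$ survives only trivially. I would then take $K=\ker(G\to S)$ and argue via the Karrass--Magnus--Solitar description of torsion in one-relator groups, together with the Bass--Serre tree coming from the HNN decomposition, that $K$ acts freely on a tree. Freeness is plausible because every finite subgroup of $G$ is conjugate into $\langle u\rangle$, and $\langle u\rangle$ maps injectively to $S$ only if $S$ has torsion. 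So I would need to choose $S$ to be a torsion quotient, for instance replacing $\Q$ by $\mathbb Z/k$ coefficients at the last stage, so that $K$ is torsion-free and hence free. That would make $G$ free-by-solvable, which is the second bullet.
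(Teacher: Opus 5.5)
\textbf{Step 2 is not yet an argument.} This step carries the whole theorem, and you flag it yourself as ``the main obstacle'' and only say what you would try. The induction you propose also cannot close with the hypothesis you name. Residual $\Q$-solvability of the one-relator vertex group $\langle x_m,\dots,x_M\mid \tilde w\rangle$ does not pass up through the Magnus--Moldavanskii HNN/amalgam structure. Take Baumslag's group $G=\langle a,t\mid (tat^{-1})a(tat^{-1})^{-1}a^{-2}\rangle$. Its rewritten relator is $a_1a_0a_1^{-1}a_0^{-2}$, so $G$ is an HNN extension of $BS(1,2)$. That group is metabelian with torsion-free abelianization and torsion-free derived subgroup, hence residually $\Q$-solvable. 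Yet $G'$ is generated by the $a_i$ subject to $a_{i+1}a_ia_{i+1}^{-1}=a_i^2$, so its abelianization is trivial and $G'=G''\neq 1$. This relator is not partially positive, but it shows the inductive step must use positivity of the extremal $a_i$ to build some extra structure. ``Torsion-free solvable quotients constructed compatibly with the HNN structure'' is exactly the unproved content, and nothing in the proposal says how positivity produces it.

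Your Step 1 difficulty, by contrast, is repairable. Magnus's embedding trick $b\mapsto y^{\alpha}$, $a\mapsto xy^{-\beta}$ keeps the new letter $x$ positive, and residual $\Q$-solvability passes to subgroups.

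The paper avoids the HNN machinery entirely. \autoref{thm:linton main} reduces residual $\Q$-solvability to showing that $w\in r^k\comm{\normclsr{r}}$ forces $r$ to be conjugate to $w^{\pm1}$. Partial positivity then enters through Fox calculus:
one has $\varphi_r(\fd{w}{a})=k\,\varphi_r(\fd{r}{a})$, and the left side has only nonnegative coefficients. Together with Weinbaum's theorem, this forces $r$ to be conjugate to a word $r'$ that is positive in $a$. Matching the terms of the two Fox derivatives by $a$-exponent sum modulo $|r'|_a$, and applying the Freiheitssatz to the $a$-free syllables, gives $w=r'^k$, so $k=1$.

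\textbf{Step 3 has a second gap.} You rightly see that you need a solvable quotient of $\langle X\mid u^k\rangle$ in which $u$ has order exactly $k$. Your first description of $S$, as a quotient of $H$, is incompatible with this, since $u$ is trivial there. More seriously, ``torsion-free, hence free'' is false for subgroups of one-relator groups with torsion. For $k\geq 2$, the torsion-free finite-index subgroups of $\langle a,b\mid [a,b]^k\rangle$ are closed surface groups. The Bass--Serre tree of the HNN decomposition does not rescue this: its vertex stabilisers are one-relator groups, so a torsion-free $K$ need not act freely, and a graph of free groups need not be free.

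What is actually needed is a solvable quotient whose kernel is a free product of cyclic groups. The paper obtains this from the proof of Linton's Lemma 4.2, using the residual $\Q$-solvability of $\langle X\mid u\rangle$. Kurosh then shows the commutator subgroup of that kernel is free, and adding one abelian layer gives a free-by-solvable group (\autoref{thm:linton torsion}).
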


Residual $\mathbb Q$-solvability and free-by-solvability are both generalizations of residual solvability.
We will start by defining free-by-solvable groups:

\begin{definition}
    A group $G$ is free-by-solvable, if its derived series contains a free group.
    That is, there is some $n$ such that $G^{(n)}$ is free.

    Equivalently, $G$ is free-by-solvable if there is a homomorphism $\varphi : G\rightarrow Q$ where $Q$ is a solvable group and $\ker(\varphi)$ is free.
\end{definition}

Since a free group is itself residually solvable, every free-by-solvable group is as well.

When we look at residually solvable groups, this generalization feels pretty natural.
There are many groups which are almost solvable but are obstructed because they contain a non-abelian free subgroup.
For example, by the Freiheitssatz, all one-relator groups with more than two generators contain non-abelian free subgroups.

\begin{definition}
    A group $G$ is residually $\Q$-solvable if its rational derived series converges to the trivial group.
    That is,
    \[
    \{1\} = G^{(\omega)}_\Q :=\bigcap^\infty_{i=0} G^{(i)}_{\Q}
    \]
    where $G^{(0)}_\Q:=G$ and $G^{(i+1)}_\Q$ is the subgroup of all elements which have a non-zero power in $\comm{G^{(i)}_\Q}$.
\end{definition}

It should be clear that $G^{(\omega)}_\Q \supseteq G^{(\omega)}$;
thus, residual $\Q$-solvability implies residual solvability.

The thrust of this paper comes from results in \cite{linton_residually_2025}.
In particular, the following:

\begin{theorem}[\cite{linton_residually_2025}*{Cor.\ 1.2}]\label{thm:linton main}
    Let $F$ be a free group, $w\in F$ a word, and $G=F/\normclsr{w}$ a one-relator group.
    The following are equivalent:
    \begin{enumerate}
        \item $G$ is residually $\mathbb{Q}$-solvable.
        \item $G_\mathbb{Q}^{(n)}$ is a free group for some integer $n\geq 0$.
        \item For all words $r\in F$ such that $|r|\leq|w|$, and all integers $k\geq 1$, if $w\in r^k \comm{\normclsr{r}}$, $w$ is conjugate to $r$ or $r^{-1}$.
    \end{enumerate}
\end{theorem}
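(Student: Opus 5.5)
The statement immediately above is Linton's result, cited as \autoref{thm:linton main}, and this paper only uses it. Still, here is how I would organize a proof, by the cycle (2)$\Rightarrow$(1)$\Rightarrow$(3)$\Rightarrow$(2). Only the last implication is deep.

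For (2)$\Rightarrow$(1), suppose $H:=G^{(n)}_\Q$ is free. I would show by induction on $k$ that $G^{(n+k)}_\Q\subseteq H^{(k)}_\Q$. Indeed, $G^{(n+k+1)}_\Q$ consists of elements with a nonzero power in $\comm{G^{(n+k)}_\Q}\subseteq\comm{H^{(k)}_\Q}$, and these elements already lie in $H$. Hence $G^{(\omega)}_\Q\subseteq H^{(\omega)}_\Q$. For a free group $H$, each quotient $H/H^{(k)}$ is torsion-free. This holds because $H^{(k)}/H^{(k+1)}$ is the abelianization of a free group, hence free abelian, and torsion-freeness passes through extensions. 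Consequently the rational derived series of $H$ coincides with its ordinary derived series, which intersects trivially. So $G$ is residually $\Q$-solvable.

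For (1)$\Rightarrow$(3) I would argue by contraposition. Suppose $w\in r^k\comm{\normclsr{r}}$ with $|r|\le|w|$, but $w$ is not conjugate to $r^{\pm1}$. Let $N$ be the image of $\normclsr{r}$ in $G$, which is normal. Then $r^k\in\comm{N}$ in $G$. I would prove $r\in G^{(i)}_\Q$ for all $i$ by induction. If $r\in G^{(i)}_\Q$, then normality gives $N\subseteq G^{(i)}_\Q$, so $r^k\in\comm{G^{(i)}_\Q}$, and hence $r\in G^{(i+1)}_\Q$. It remains to see that $r\neq1$ in $G$. If $r=1$ in $G$, then $\normclsr{r}\subseteq\normclsr{w}$, while $w\in\normclsr{r}$ by hypothesis. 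The normal closures would then coincide, and Magnus' theorem would make $w$ conjugate to $r^{\pm1}$, a contradiction. Thus $G^{(\omega)}_\Q\ni r\ne1$.

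The main obstacle is (3)$\Rightarrow$(2), and for it I would induct along a Magnus--Moldavanskii hierarchy. First, if $w\notin F^{(1)}_\Q$, then some generator has nonzero exponent sum after a change of basis. In that case the kernel of $G\to\mathbb Z$ is an HNN-type splitting, or a union of one-relator pieces with shorter relators, and I would try to show that $G^{(1)}_\Q$ is free or splits into smaller one-relator groups inheriting condition (3). If instead $w\in F^{(1)}_\Q$, I would pass to a finite-index or rationally-abelian cover in which $w$ lifts to a shorter relator $w'$. Condition (3) is exactly what prevents $w'$ from being a proper power modulo the derived subgroup of its normal closure, and hence prevents the induction from stalling. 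The terminal case is a relator that is primitive in a free factor, where the group is free. The delicate point is checking that (3), with its length bound $|r|\le|w|$, descends to the pieces of the hierarchy. I would first try to verify this via the relation module $\normclsr{w}/\comm{\normclsr{w}}$, using Lyndon's identity theorem to control when $w$ is a $k$-th power modulo $\comm{\normclsr{r}}$.
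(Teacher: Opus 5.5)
The paper contains no proof of this statement; it is quoted from Linton (Cor.~1.2), so there is no internal argument to compare with. Your two easy implications are correct and complete. In (2)$\Rightarrow$(1), $G^{(n+k)}_{\Q}=H^{(k)}_{\Q}$, and the rational and ordinary derived series of a free group coincide. In (1)$\Rightarrow$(3), the induction $r\in G^{(i)}_{\Q}$ (normality of $G^{(i)}_{\Q}$ plus $k\ge 1$), together with Magnus' theorem on equal normal closures, does the job without even using $|r|\le |w|$.

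The gap is (3)$\Rightarrow$(2), which in your cycle also has to carry (1)$\Rightarrow$(2). What you give there is a plan rather than an argument, and its pieces do not fit together.

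\textbf{The hierarchy does not track the rational derived series.} The Magnus--Moldavanskii splitting describes the kernel of a single epimorphism $G\to\mathbb{Z}$. By contrast, $G^{(1)}_{\Q}$ is the kernel of $G\to\mathbb{Z}^{b_1(G)}$, and the later terms are kernels of maps onto torsion-free solvable groups. You never relate $G^{(i)}_{\Q}$ to the rational derived series of the vertex groups. Even free intersections with every vertex group would not make $G^{(n)}_{\Q}$ free, since groups acting on trees with free vertex stabilisers need not be free (e.g.\ $BS(1,2)$).

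\textbf{The obstruction is misidentified.} The case $k=1$ of (3) is essential and has nothing to do with proper powers. Take $w=t^{-1}at\,a\,t^{-1}a^{-1}t\,a^{-2}$. The subgroup $\normclsr{a}$ is free on $a_i=t^{-i}at^{i}$, and $w=a_1a_0a_1^{-1}a_0^{-2}\equiv a^{-1}$ modulo $\comm{\normclsr{a}}$. So (3) fails with $r=a^{-1}$, $k=1$. Correspondingly, $G^{(1)}_{\Q}=G'$ is nontrivial and perfect, so the series stalls at a non-free group. A mechanism that only excludes proper powers cannot see this.

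\textbf{No short witness is produced.} Most importantly, you never explain how a failure of (2) yields a witness $r$ with $|r|\le|w|$. Equivalently, you never show how (3) for $w$ transfers to the rewritten relators, whose potential witnesses live in a different free group with a different length function and conjugacy relation. Extracting such a short witness is exactly the content of Linton's theorem. It is also what the paper relies on when it reduces the main result to \autoref{lem:w=r}.
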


In \cite{linton_residually_2025}, Linton is able to use this result to create an algorithm that decides whether a one-relator group is residually $\Q$-solvable by checking all possible values for $r$ up to the length of the word.

We will also require the following lemma which is used implicitly in \cite{linton_residually_2025}:

\begin{lemma}\label{thm:linton torsion}
    Let $F$ be a free group. Let $w\in F$ be a word which is not a proper power.
    If $F/\normclsr{w}$ is residually $\Q$-solvable then $F/\normclsr{w^n}$ is free-by-solvable for all $n$.
\end{lemma}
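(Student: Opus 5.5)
The plan is to build a solvable quotient of $H:=F/\normclsr{w^n}$ out of the solvable quotient of $G:=F/\normclsr{w}$ supplied by \autoref{thm:linton main}, and then pass to one further abelian layer so as to kill the torsion coming from $w$. By \autoref{thm:linton main} (1)$\Rightarrow$(2), there is an $m$ with $N:=G^{(m)}_\Q$ free. Put $Q:=G/N$. Each factor $G^{(i)}_\Q/G^{(i+1)}_\Q$ is torsion-free abelian, so $Q$ is solvable and torsion-free. Compose $H\to G\to Q$ and let $K$ be the kernel of this composite. Note that $K$ itself cannot be free. Indeed, $w$ maps trivially to $Q$, so $w\in K$. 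Since $w$ is not a proper power, $w$ has order exactly $n$ in $H$. So $K$ contains torsion, and an extra step is needed.

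\textbf{Structure of $K$ (main obstacle).} I would use covering spaces. Let $\Gamma$ be the Cayley graph of $Q$ with respect to the image of $X$. The presentation complex of $G$ has a $Q$-cover $\widetilde X_w$: it is $\Gamma$ with a $2$-cell attached along each $Q$-translate $q\cdot w$ of the closed lift of $w$. This lift is closed because $w\mapsto 1$ in $Q$. Its fundamental group is $N$, which is free. Similarly, the $Q$-cover $\widetilde X_{w^n}$ of the presentation complex of $H$ is $\Gamma$ with $2$-cells attached along $(q\cdot w)^n$, and its fundamental group is $K$.

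The claim to prove is
\[
K\cong N*\mathop{\ast}_{q\in Q}\mathbb Z/n,
\]
where the $q$-th factor is generated by the loop $q\cdot w$. Equivalently: the classes of the loops $q\cdot w$ in $\pi_1(\Gamma)$ form part of a free basis, up to the appropriate conjugations. To get this I would exhaust $Q$ by finite subsets, so that only finitely generated pieces are involved. Freeness of $N$, the fact that $\widetilde X_w$ has no torsion in $\pi_1$, and Whitehead/Magnus-type arguments for one-relator groups should then show that killing these loops in a free group yields a free group only when they are primitive-compatible, i.e.\ jointly extendable to a basis. This would likely be done by induction over a tower of Magnus–Moldavanskii HNN-splittings. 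Once the loops are known to extend to a basis, replacing the relators $q\cdot w$ by $(q\cdot w)^n$ immediately gives the free product decomposition above. I expect this step to carry all the difficulty; everything after it is formal.

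\textbf{Finishing.} Given the decomposition, the abelianization $K/[K,K]$ is a free abelian group plus $\bigoplus_Q\mathbb Z/n$. Each finite subgroup of $K$ is conjugate into a factor $\mathbb Z/n$, and so it injects into $K/[K,K]$. Hence $[K,K]$ is torsion-free. By the Kurosh subgroup theorem, a torsion-free subgroup of a free product of a free group and finite groups is free, so $[K,K]$ is free. The subgroup $[K,K]$ is characteristic in $K$, which is normal in $H$, so $[K,K]\trianglelefteq H$. Finally, $H/[K,K]$ is an extension of the abelian group $K/[K,K]$ by the solvable group $Q$, hence solvable. So $H$ is free-by-solvable, using the second formulation in the definition.
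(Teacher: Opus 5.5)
Your closing paragraph is correct and matches the paper's argument. The paper also takes the commutator subgroup of a kernel that is a free product of cyclic groups, applies Kurosh, and notes that the quotient is abelian-by-solvable. Your observation that every finite subgroup of $K$ is conjugate into a factor, and so injects into $K/[K,K]$, is exactly what makes Kurosh give freeness of $[K,K]$; the paper leaves that point implicit.

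The gap is the step you flag as the main obstacle. The decomposition $K\cong N*\bigl(\ast_{q\in Q}\mathbb Z/n\bigr)$ is never proved, and the mechanism you propose for it is not a valid principle. It is false in general that killing a family of elements of a free group yields a free group only when those elements are, up to conjugation, part of a basis. Take $x$ and $u=yx^{-1}y^{-1}xy$ in $F(x,y)$.
\begin{itemize}
\item They normally generate $F(x,y)$, since modulo $x$ the word $u$ becomes $y$. So the quotient is free of rank $0=2-2$.
\item Yet $u$ is not primitive. It is cyclically reduced of length $5$, while a primitive element of $F_2$ with the same abelianization as $y$ must be conjugate to $y$.
\end{itemize}
So freeness of $N$ and torsion-freeness of $\pi_1(\widetilde X_w)$ cannot by themselves give the basis-extension property. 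A proof has to exploit the specific structure of the $Q$-translates of the relator of a one-relator complex. Your appeal to Magnus--Moldavanskii splittings plus an exhaustion of $Q$ does not do this. With infinitely many relators $q\cdot w$ in a possibly infinitely generated $\pi_1(\Gamma)$, that is a hope rather than an argument.

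The paper does not prove this step from scratch either. It imports it from the proof of Linton's Lemma 4.2, which supplies a solvable $Q$ and a homomorphism $F/\normclsr{w^n}\to Q$ whose kernel is a free product of cyclic groups. If you replace your covering-space sketch by that input, or by a genuine proof of the decomposition, the rest of your argument goes through unchanged.
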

\begin{proof}
    By the proof of \cite{linton_residually_2025}*{Lemma 4.2}, there exists a solvable group $Q$ and a homomorphism $\varphi: F/\normclsr{w^n}\rightarrow Q$ such that $\ker(\varphi)$ is the free product of cyclic groups.
    Assume without loss of generality that $\varphi$ is onto.

    By the Kurosh subgroup theorem (\cite{kuhn_subgroup_1952}*{Thm.\ 3.01}) $\comm{\ker(\varphi)}$ is free.
    Since the commutator subgroup is a characteristic subgroup and $\ker(\varphi)$ is normal in $F/\normclsr{w^n}$, $\comm{\ker(\varphi)}$ is a normal subgroup of $F/\normclsr{w^n}$.
    Thus, let 
    \[
    Q'=(F/\normclsr{w^n})/\comm{\ker(\varphi)}
    \]
    
    Since $\ker(\varphi)$ is a subgroup of $F/\normclsr{w^n}$, $\Ab(\ker(\varphi))$ is a subgroup of $Q'$.
    Thus, there is a natural inclusion map $\Ab(\ker(\varphi))\rightarrow Q'$.
    
    Now we have the following short exact sequence:
    \[
        1 \rightarrow \Ab(\ker(\varphi))\rightarrow Q' \rightarrow Q \rightarrow 1
    \]
    Since $\Ab(\ker(\varphi))$ is solvable (Abelian) and $Q$ is solvable, $Q'$ is the extension of a solvable group by a solvable group.
    Thus, $Q'$ is solvable, and so $F/\normclsr{w^n}$ is free-by-solvable.
\end{proof}

We will also use Fox derivatives.
The Fox derivative is an algebraic object which captures geometric information about commutator subgroups.

\begin{definition}[\cite{fox_free_1953}]
    Let $F_X$ be a free group.
    The Fox derivative is a function $X \rightarrow (F_X \rightarrow \mathbb Z[F_X])$, written $\fd{w}{x}$ for $w\in F_X$ and $x\in X$.
    It is defined to follow the following laws:
    \begin{align}
        \fd{x}{x} &= 1 & \\
        \fd{y}{x} &= 0 & \text{ where } y\in X\setminus\{x\} \\
        \fd{\varepsilon}{x} &= 0 & \label{eqn:Foxidentity} \\
        \fd{uv}{x} &= \fd ux + u\fd vx & \tag{Product rule} \label{eqn:Foxprod}
    \end{align}
\end{definition}

Most useful to us is a theorem of Fox:

\begin{theorem}[\cite{myasnikov_word_2010}]\label{thm:Fox}
    Let $F_X$ be a free group.
    Let $\varphi : F_X \rightarrow G$ be a group homomorphism, and let $\varphi^\dagger : \mathbb Z[F_X] \rightarrow \mathbb Z[G]$ be its natural lift to the group ring.
    Then for every $w\in F_X$,
    \[
    \left(\forall x\in X : \varphi^\dagger\left(\fd{w}{x}\right) = 0 \right) \iff w\in \comm{\ker(\varphi)}
    \]
\end{theorem}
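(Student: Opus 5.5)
The plan is the standard covering-space argument: interpret the images $\varphi^\dagger\left(\fd{w}{x}\right)$ as the coefficients of the cellular $1$-chain traced out by the lift of $w$ to the Cayley graph of $\varphi(F_X)$. Membership in $\comm{\ker(\varphi)}$ then becomes the vanishing of that chain, because for a graph the Hurewicz map is exactly abelianization.

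\emph{Setup.} Replace $G$ by $H=\varphi(F_X)$; this changes neither side of the equivalence, since $\mathbb Z[H]\subseteq\mathbb Z[G]$. Let $N=\ker(\varphi)$, and let $\Gamma$ be the Cayley graph of $H$ with respect to $\varphi(X)$. Its vertices are the elements of $H$, and for each $h\in H$ and $x\in X$ there is an edge $h e_x$ from $h$ to $h\varphi(x)$. Then $\Gamma$ is the covering of the rose $R_X$ corresponding to $N$, so $\pi_1(\Gamma,1)\cong N$. Since $\Gamma$ is a graph, its cellular chain complex is $0\to C_1\to C_0\to 0$, where $C_1=\bigoplus_{x\in X}\mathbb Z[H]e_x$ is a free $\mathbb Z[H]$-module. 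Hence $H_1(\Gamma)=Z_1=\ker(\partial)\subseteq C_1$, and the Hurewicz map $N=\pi_1(\Gamma)\to H_1(\Gamma)$ is abelianization. Its kernel is therefore exactly $\comm{N}$.

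\emph{Key computation.} For any $w\in F_X$, let $c(w)\in C_1$ be the $1$-chain of the lift of $w$ to $\Gamma$ starting at the vertex $1$, with edges traversed backwards counted negatively. I claim
\[
c(w)=\sum_{x\in X}\varphi^\dagger\!\left(\fd{w}{x}\right)e_x .
\]
I will prove this by induction on word length using the \eqref{eqn:Foxprod}. The lift of $uv$ is the lift of $u$ followed by the $\varphi(u)$-translate of the lift of $v$, so $c(uv)=c(u)+\varphi(u)c(v)$, which is exactly the product rule after applying $\varphi^\dagger$. The base cases $x$ and $\varepsilon$ are immediate, and $x^{-1}$ follows from $0=c(xx^{-1})$. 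For $w\in N$ the lift is a loop and $c(w)$ is its homology class. So the Hurewicz map is $w\mapsto c(w)$, and for $w\in N$ we get: all $\varphi^\dagger(\fd wx)=0$ iff $w\in\comm N$.

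\emph{Removing the assumption $w\in N$.} This is the only step that is not formal, and I expect it to need the most care. The implication ($\Leftarrow$) is fine, since $\comm N\subseteq N$. For ($\Rightarrow$), I must show that vanishing of all derivatives forces $w\in N$. For this I will use Fox's fundamental formula
\[
w-1=\sum_{x\in X}\fd wx\,(x-1),
\]
which is itself proved by the same induction with the product rule. Applying $\varphi^\dagger$ gives $\varphi(w)-1=0$ in $\mathbb Z[H]$, so $\varphi(w)=1$ and $w\in N$. Geometrically, this says the boundary of $c(w)$ is $\varphi(w)-1$, so a zero chain must come from a closed loop. The previous paragraph then gives $w\in\comm N$.
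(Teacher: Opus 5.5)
The paper gives no proof of this statement; it simply quotes it from \cite{myasnikov_word_2010}. So there is no internal argument to compare against, and the only question is whether your proof stands on its own. It does: it is correct and complete.

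All three ingredients are sound.
\begin{enumerate}
\item The identification $c(w)=\sum_{x}\varphi^\dagger\left(\fd wx\right)e_x$ follows from the product rule. This implicitly uses that $c$ is unchanged by inserting $xx^{-1}$, so it is well defined on $F_X$. It also uses that edges are indexed by $x\in X$ rather than by the elements $\varphi(x)$, which may coincide or be trivial; you set this up correctly.
\item For the connected graph $\Gamma$, the Hurewicz map on $\pi_1(\Gamma,1)\cong N=\ker(\varphi)$ has kernel exactly $\comm N$.
\item The fundamental formula, after applying $\varphi^\dagger$, gives $\varphi(w)-1=0$ in $\mathbb Z[H]$, hence $w\in N$.
\end{enumerate}

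Note that the paper only ever uses the direction ($\Leftarrow$), in \autoref{lem:fox deriv of linton}, and that half needs no topology. For $u\in N$ the product rule gives $\varphi^\dagger\left(\fd{uv}{x}\right)=\varphi^\dagger\left(\fd ux\right)+\varphi^\dagger\left(\fd vx\right)$. So $u\mapsto\varphi^\dagger\left(\fd ux\right)$ is a homomorphism from $N$ to the additive group of $\mathbb Z[G]$, and therefore it kills $\comm N$.

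Your covering-space argument is what is genuinely needed for ($\Rightarrow$). There the real content is that a vanishing edge-flow forces the lifted path to be a closed, null-homologous loop.
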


\section{Results}

\begin{lemma}\label{lem:fox deriv of linton}
    Let $w$ be a word in a free group $F$ and $a$ be a generator of $F$.
    Let $r$ and $k\geq 1$ be such that:
    \begin{equation}\label{eqn:w r relation}
      w\in r^k\left[\normclsr{r},\normclsr{r}\right]
    \end{equation}
    Let $\varphi_r : F \rightarrow F/\normclsr{r}$ be the quotient map.
    Then
    \[
    \varphi_r\left(\fd{w}{a}\right) = k\varphi_r\left(\fd{r}{a}\right)
    \]
\end{lemma}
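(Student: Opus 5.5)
Write $w = r^k c$ with $c\in\comm{\normclsr{r}}$, which is possible by \eqref{eqn:w r relation}. We will compute the Fox derivative of $w$ in two pieces using the \eqref{eqn:Foxprod}, then push each piece through $\varphi_r$ (extended to the group ring, as $\varphi_r^\dagger$ in \autoref{thm:Fox}). Applying the product rule gives
\[
\fd{w}{a} = \fd{r^k}{a} + r^k\fd{c}{a}.
\]

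The term involving $c$ is handled by \autoref{thm:Fox}, taking $\varphi=\varphi_r$. Its kernel is exactly $\normclsr{r}$, so $c\in\comm{\ker(\varphi_r)}$, and the ``$\Leftarrow$'' direction of \autoref{thm:Fox} gives $\varphi_r(\fd{c}{a})=0$. Since $\varphi_r$ is a ring homomorphism on $\mathbb Z[F]$, we get $\varphi_r(r^k\fd{c}{a}) = \varphi_r(r^k)\cdot 0 = 0$.

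For the power term we argue by induction on $k$ using the product rule. This gives
\[
\fd{r^k}{a} = \left(1 + r + r^2 + \dots + r^{k-1}\right)\fd{r}{a}.
\]
Applying $\varphi_r$, every $r^j$ maps to $1$ in $F/\normclsr{r}$. The prefactor therefore maps to $k$, and we obtain $\varphi_r(\fd{r^k}{a}) = k\,\varphi_r(\fd{r}{a})$. Adding the two contributions yields the claim.

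No step is a real obstacle. The only points needing care are, first, that \autoref{thm:Fox} applies with $\ker(\varphi_r)=\normclsr{r}$ exactly, and second, that $\varphi_r$ extends multiplicatively to the group ring, so that the left factors $r^k$ and $r^j$ can be evaluated to $1$. If $k$ were allowed to be negative one would also need $\fd{r^{-1}}{a}=-r^{-1}\fd{r}{a}$, but here $k\ge 1$, so the positive-power induction suffices.
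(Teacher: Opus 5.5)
Your proposal is correct and matches the paper's proof essentially step for step. You write $w=r^kc$ and apply the product rule, use Fox's theorem to kill the $c$ term, and iterate the product rule to get $\sum_{i=0}^{k-1} r^i\fd{r}{a}$, which maps to $k\varphi_r\left(\fd{r}{a}\right)$.
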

\begin{proof}
    Let $c\in \comm{\normclsr{r}}$ be the solution to the equation $w=r^kc$ implied by \autoref{eqn:w r relation}.
    We apply the Fox derivative with respect to $a$, and then the quotient map $\varphi_r$ (lifted to the group ring).
    \begin{align*}
    \varphi_r\left(\fd{w}{a}\right) 
    &= \varphi_r\left(\fd{r^kc}{a}\right) \\
    &= \varphi_r\left(\fd{r^k}{a}\right)+\varphi_r\left(r^k\right)\varphi_r\left(\fd{c}{a}\right) \tag{\ref{eqn:Foxprod}}\\
    &= \varphi_r\left(\fd{r^k}{a}\right)+1\cdot 0 \tag{\autoref{thm:Fox}}\\
    \end{align*}
    Iterating the \hyperref[eqn:Foxprod]{product rule of Fox derivatives}, we see that
    \begin{align*}
    \varphi_r\left(\fd{r^k}{a}\right)
    &= \varphi_r\left(\sum_{i=0}^{k-1} r^i\fd{r}{a}\right) \\
    &= k\varphi_r\left(\fd{r}{a}\right)
    \end{align*}
\end{proof}

\begin{notation}
    We will use ``quandle notation'' for denoting group conjugation.
    \[
      y\lhd x := yxy^{-1}
    \]
\end{notation}

\begin{lemma}\label{lem:conjugation of Fox derivative}
    Let $F_X$ be a free group and $\varphi : F_X \rightarrow G$ be a homomorphism.
    If $w\in \ker(\varphi)$ then
    \[
    \varphi\left(\fd{q\lhd w}{a}\right) = \varphi\left(q\fd{w}{a}\right)
    \]
    for all $q\in F_X$ and $a\in X$.
\end{lemma}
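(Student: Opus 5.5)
We expand $q\lhd w = qwq^{-1}$ with the \hyperref[eqn:Foxprod]{product rule} and then apply $\varphi$, lifted to the group ring as in \autoref{lem:fox deriv of linton}. The only auxiliary fact needed is the Fox derivative of an inverse. From $0=\fd{\varepsilon}{a}=\fd{qq^{-1}}{a}=\fd{q}{a}+q\fd{q^{-1}}{a}$ we get
\[
\fd{q^{-1}}{a} = -q^{-1}\fd{q}{a}.
\]

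Applying the product rule twice gives
\begin{align*}
\fd{qwq^{-1}}{a} &= \fd{q}{a} + q\fd{w}{a} + qw\fd{q^{-1}}{a} \\
&= \fd{q}{a} + q\fd{w}{a} - qwq^{-1}\fd{q}{a}.
\end{align*}
The lift of $\varphi$ to $\mathbb Z[F_X]\rightarrow\mathbb Z[G]$ is a ring homomorphism, so we may apply it term by term. Since $w\in\ker(\varphi)$, we have $\varphi(qwq^{-1})=\varphi(q)\varphi(q)^{-1}=1$. The first and third terms therefore become $\varphi\left(\fd{q}{a}\right)$ and $-\varphi\left(\fd{q}{a}\right)$, and they cancel. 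What remains is $\varphi(q)\varphi\left(\fd{w}{a}\right)=\varphi\left(q\fd{w}{a}\right)$, which is the claimed identity.

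Nothing in this argument is a real obstacle. The one point needing care is the notation: the display $y\lhd x := yxy^{-1}$ makes $q\lhd w = qwq^{-1}$, and that is the reading used above. If one instead took $q^{-1}wq$, the same computation would give $\varphi\left(q^{-1}\fd{w}{a}\right)$. So the stated form with left multiplication by $q$ matches the convention $qwq^{-1}$, and we use that convention throughout.
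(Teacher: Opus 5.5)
Your proof is correct and follows essentially the paper's argument: expand $qwq^{-1}$ with the product rule, use $w\in\ker(\varphi)$ to drop $w$ after applying $\varphi$, and cancel the two outer terms via $\partial q/\partial a + q\,\partial q^{-1}/\partial a = \partial(qq^{-1})/\partial a = 0$. The only difference is cosmetic: you solve for the derivative of $q^{-1}$ first, whereas the paper replaces $\varphi(qw)$ by $\varphi(q)$ and at the end recombines the outer terms into the derivative of $qq^{-1}$.
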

\begin{proof}
    \begin{align*}
        \varphi\left(\fd{qwq^{-1}}{a}\right)
        &= \varphi\left(\fd{q}{a}+q\fd{w}{a} + qw\fd{q^{-1}}{a}\right) \tag{\ref{eqn:Foxprod}}\\
        &= \varphi\left(\fd{q}{a}+q\fd{w}{a} + q\fd{q^{-1}}{a}\right) \\
        &= \varphi\left(\fd{qq^{-1}}{a}+q\fd{w}{a}\right) \tag{\ref{eqn:Foxprod}}\\
        &= \varphi\left(q\fd{w}{a}\right) \tag{\ref{eqn:Foxidentity}}
    \end{align*}
\end{proof}

\begin{lemma}\label{lem:rispp}
    Let $w$ be a partially positive word in a free group $F$.
    Let $r\in F$ and $k\geq 1$ be such that $w\in r^k\comm{\normclsr{r}}$.
    Then $r$ is conjugate to a word $r'$ such that the positive generator in $w$ appears only positively in $r'$.
\end{lemma}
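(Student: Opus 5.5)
The plan is to compare Fox derivatives with respect to the positive generator $a$ after passing to $G=F/\normclsr{r}$. We show that $\varphi_r\left(\fd{w}{a}\right)$ has only nonnegative coefficients, while every occurrence of $a^{-1}$ in a cyclically reduced conjugate of $r$ would produce a negative coefficient in $\varphi_r\left(\fd{r}{a}\right)$ that nothing can cancel.

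\emph{Step 1 (the $w$ side).} Write $w$ as a word $x_1\cdots x_n$ in which $a$ occurs only with exponent $+1$. Iterating the \hyperref[eqn:Foxprod]{product rule}, $\fd{w}{a}=\sum_{i:\,x_i=a} x_1\cdots x_{i-1}$. This is a sum of group elements with coefficient $+1$. Hence its image under $\varphi_r$ is an element of $\mathbb Z[G]$ all of whose coefficients are nonnegative. By \autoref{lem:fox deriv of linton},
\[
k\,\varphi_r\left(\fd{r}{a}\right)=\varphi_r\left(\fd{w}{a}\right),
\]
and since $k\geq 1$, every coefficient of $\varphi_r\left(\fd{r}{a}\right)$ is nonnegative.

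\emph{Step 2 (replace $r$ by a cyclically reduced conjugate).} Let $r'=q\lhd r$ be cyclically reduced. Then $\normclsr{r'}=\normclsr{r}$, so $\varphi_r=\varphi_{r'}$. By \autoref{lem:conjugation of Fox derivative},
\[
\varphi_r\left(\fd{r'}{a}\right)=\varphi_r(q)\,\varphi_r\left(\fd{r}{a}\right).
\]
Left multiplication by a group element only permutes the support, so the coefficients of $\varphi_r\left(\fd{r'}{a}\right)$ are also all nonnegative.

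\emph{Step 3 (expand $\fd{r'}{a}$ and rule out cancellation).} Write $r'=y_1\cdots y_m$ and let $p_j=y_1\cdots y_j$ be its prefixes. The product rule gives
\[
\fd{r'}{a}=\sum_{i:\,y_i=a}p_{i-1}\;-\;\sum_{j:\,y_j=a^{-1}}p_j .
\]
Positive terms come from prefixes of length $0,\dots,m-1$, and negative terms from prefixes of length $1,\dots,m$. The key claim is that $p_0,\dots,p_{m-1}$ have pairwise distinct images in $G$, while $p_m=r'\mapsto 1=\varphi_r(p_0)$. For $s<t<m$, $p_s^{-1}p_t$ is a nonempty proper subword of the cyclically reduced relator $r'$. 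By Weinbaum's theorem, no such subword is trivial in the one-relator group $G$; if $r'$ is a proper power this still holds, via the torsion theorem.

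Given the claim, a positive term and a negative term can only cancel in two cases. Either $p_{i-1}=p_j$ with $j=i-1$, which means $y_{i-1}=a^{-1}$ is immediately followed by $y_i=a$, impossible since $r'$ is reduced. Or the negative term is $p_m$ and the positive term is $p_0$, which means $r'$ ends in $a^{-1}$ and begins with $a$, impossible since $r'$ is cyclically reduced. All other terms are supported on distinct group elements. Hence every occurrence of $a^{-1}$ in $r'$ yields a coefficient $-1$ in $\varphi_r\left(\fd{r'}{a}\right)$, contradicting Step 2. So $a$ occurs only positively in $r'$.

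The main obstacle is the claim in Step 3 that proper prefixes of $r'$ stay distinct in $G$. This requires invoking Weinbaum's subword theorem, which the paper has not stated, and checking that it applies when $r'$ is a proper power. If that citation proves unavailable, I would instead use the Magnus–Freiheitssatz to show directly that a proper subword missing some generator is nontrivial, but the general statement needs Weinbaum.
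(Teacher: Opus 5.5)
Your proposal is correct and follows essentially the same route as the paper. It uses the positivity of $\varphi_r\left(\fd{w}{a}\right)$ via \autoref{lem:fox deriv of linton}, passes to a cyclically reduced conjugate $r'$ via \autoref{lem:conjugation of Fox derivative}, and invokes Weinbaum's theorem to show that the $-1$ coming from an occurrence of $a^{-1}$ in $r'$ cannot be cancelled.

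Your bookkeeping is tidier than the paper's. By showing that all proper prefixes $p_0,\dots,p_{m-1}$ have distinct images, you rule out cancellation from both sides of the $a^{-1}$ and across the wrap-around $p_m=p_0$, which the paper handles only implicitly. Your remark on the proper-power case (Weinbaum applied to cyclic permutations of the root, together with the exact order of the root) covers a point the paper does not address.
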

\begin{proof}
    Again, let $\varphi_r : F \rightarrow F/\normclsr{r}$ be the natural quotient map, and let $a$ be the generator of $F$ which appears only positively in $w$.
    
    If $r$ is not cyclically reduced, then there is some $q$ such that $q \lhd r$ is cyclically reduced.
    Let $r'=q\lhd r$.
    Thus, $q\lhd w\in r'^k\comm{\normclsr{r'}}$, for partially positive $w$ and cyclically reduced $r'$.
    From here we can derive the following:
    \begin{align*}
        k\varphi_r\left(\fd{r'}{a}\right)
        &= \varphi_r\left(\fd{q\lhd w}{a}\right)\tag{\autoref{lem:fox deriv of linton}}\\
        &= \varphi_r\left(q\fd{w}{a}\right)\tag{\autoref{lem:conjugation of Fox derivative}}
    \end{align*}
    Assume that $r'$ contains $a^{-1}$, and therefore can be written as a product $r_0a^{-1}r_1$, without cancellation, for some $r_0$ and $r_1$.
    \begin{align*}
        \varphi_r\left(\fd{r_0a^{-1}r_1}{a}\right) 
        &= \varphi_r\left(\fd{r_0a^{-1}}{a}\right) + \varphi_r\left(r_0a^{-1}\fd{r_1}{a}\right) \\
        &= \varphi_r\left(\fd{r_0}{a}\right) -\varphi_r(r_0a^{-1}) + \varphi_r\left(r_0a^{-1}\fd{r_1}{a}\right)
    \end{align*}
    We now have a negative term, $-\varphi_r\left(r_0a^{-1}\right)$.
    Now since $\varphi_r\left(q\fd{w}{a}\right)$ contains only positive coefficients, and $-\varphi_r(r_0a^{-1})$ is a negative coefficient, there must be some term elsewhere which cancels it.
    In other words, $\varphi_r\left(\fd{r_1}{a}\right)$ must contain a positive constant term.
    Thus $r_1$ must have a prefix of $sa$ (without cancellation) where $\varphi_r(s)=1$.
    However, $s$ must be a proper subword of $r$ and a classic result of Weinbaum, \cite{weinbaum_relators_1972}*{Thm.\ 2} shows that no proper non-empty subword of $r$ can be trivial under $\varphi_r$.
    Thus $r$ cannot contain $a^{-1}$ terms.
\end{proof}
\begin{lemma}\label{lem:w=r}
    Let $F$ be a free group.
    Let $w\in F$ be partially positive and not a proper power.
    If $r$ and $k\geq 0$ are such that
    \[
        w\in r^k\comm{\normclsr{r}}
    \]
    then $r$ is conjugate to $w$.
\end{lemma}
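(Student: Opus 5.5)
We first handle the degenerate cases and then compare $a$-degrees. If $k=0$, then $w\in\comm{\normclsr{r}}\subseteq\comm{F}$. Let $a$ be the generator that appears only positively in $w$ and at least once. The exponent sum of $a$ in $w$ is then positive, so $w\notin\comm{F}$, which is a contradiction. We may therefore assume $k\geq 1$. By \autoref{lem:rispp} we may replace $r$ by a conjugate $r'$ that is cyclically reduced and in which $a$ appears only positively. The hypothesis transforms as in the proof of \autoref{lem:rispp}: $q\lhd w\in r'^k\comm{\normclsr{r'}}$. Since $r'$ is conjugate to $r$, it suffices to show that $r'$ is conjugate to $w$. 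Replacing $w$ by a cyclically reduced conjugate, which is still partially positive, not a proper power, and has the same relation up to conjugation, we may also assume $w$ is cyclically reduced.

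The main computation uses \autoref{lem:fox deriv of linton} together with \autoref{lem:conjugation of Fox derivative}:
\[
\varphi_r\left(q\fd{w}{a}\right) = k\,\varphi_r\left(\fd{r'}{a}\right).
\]
Both sides have nonnegative coefficients, because $a$ occurs only positively in $w$ and in $r'$, so no cancellation can occur between terms of opposite sign. Apply the augmentation map $\mathbb Z[F/\normclsr{r}]\to\mathbb Z$ to both sides. The left side becomes the number $m_w$ of occurrences of $a$ in $w$, and the right side becomes $k\,m_{r'}$. Hence $m_w=k\,m_{r'}$. Abelianizing the relation $w\in r^k\comm{\normclsr r}$ also gives $[w]=k[r]$ in $\mathbb Z^{X}$.

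The key step is to show $k=1$ and then that $w$ and $r'$ agree up to conjugation. My plan is to compare the supports. Each occurrence of $a$ in $w$ contributes a term $\varphi_r(qp)$, where $p$ is the prefix of $w$ preceding that occurrence. The right side consists of the $m_{r'}$ terms coming from the prefixes of $r'$, each with coefficient $k$. By Weinbaum's theorem (used in \autoref{lem:rispp}), the prefixes of $r'$ ending just before an $a$ have pairwise distinct images under $\varphi_r$. So the right side has exactly $m_{r'}$ distinct group elements, each with coefficient $k$. Equating the two sides, the prefixes of $q\lhd w$ before occurrences of $a$ must map onto these elements, each hit exactly $k$ times. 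I would use this to read $w$ off as a product of $k$ blocks, each equal to $r'$ modulo $\normclsr r$ and starting at the same position. I would then try to upgrade this to equality in $F$, either by a length argument or by invoking condition (3) of \autoref{thm:linton main}, whose length bound $|r|\leq|w|$ is available after minimizing. That would force $w$ to be conjugate to $r'^k$. Since $w$ is not a proper power, $k=1$ and $w$ is conjugate to $r'$.

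I expect this last upgrade, from equality modulo $\normclsr r$ to equality in $F$, to be the main obstacle. If it fails, a fallback when $k=1$ is to argue directly: then $w\in r\comm{\normclsr r}$, so $w$ normally generates the same subgroup as $r$ modulo $\comm{\normclsr r}$. In that case I would try to apply a Magnus-type result that two elements with the same normal closure are conjugate up to inversion, with the partial positivity in $a$ ruling out the inverse.
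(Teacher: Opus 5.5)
Your preliminary steps are sound and parallel the paper's setup: the $k=0$ case via the $a$-exponent sum, the passage to a conjugate $r'$ via \autoref{lem:rispp}, the identity $\varphi_r(q\,\fd{w}{a})=k\,\varphi_r(\fd{r'}{a})$, and the fact that the $n=|r'|_a$ group elements on the right are distinct, so each is hit exactly $k$ times by terms coming from $w$. The trouble is the step you flag as the main obstacle. That step is where the proof actually happens, and none of your proposed routes gets through it.

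Saying that $w$ splits into $k$ blocks each ``equal to $r'$ modulo $\normclsr{r}$'' carries no information, since $r'$ is itself trivial in $F/\normclsr{r}$. Invoking condition (3) of \autoref{thm:linton main} is circular, because this lemma is precisely what is used to verify condition (3) for $F/\normclsr{w}$. The Magnus fallback needs $\normclsr{w}=\normclsr{r}$. But $w\in r\comm{\normclsr{r}}$ only gives $\normclsr{w}\comm{\normclsr{r}}=\normclsr{r}$, i.e.\ that $\normclsr{r}/\normclsr{w}$ is perfect. For example, $w=a[a,bab^{-1}]$ lies in $a\comm{\normclsr{a}}$ without being conjugate to $a^{\pm1}$. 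So partial positivity would have to do real work there as well, and you do not say how.

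What is missing is an alignment invariant together with the Freiheitssatz, applied to the $a$-free segments rather than to whole blocks. After rotating so both words end in $a$, write $w=w_0a\,w_1a\cdots w_{nk-1}a$ and $r'=r_0a\cdots r_{n-1}a$, with $w_j,r_i$ free of $a$. Let $\hat w_j=w_0a\cdots aw_j$ and $\hat r_i=r_0a\cdots ar_i$ be the prefixes indexing the terms of the Fox derivatives.

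Because $|r'|_a=n$, the $a$-exponent sum is well defined modulo $n$ on $F/\normclsr{r}$. The paper rotates $r'$ past occurrences of $a$ (each such rotation shifts $|q|_a$ by one) until $|q|_a\equiv 0\pmod n$. Then $\varphi_r(\hat w_j)=\varphi_r(q\hat r_i)$ holds if and only if $j\equiv i\pmod n$. So the matching is not just a bijection of multisets: consecutive prefixes of $w$ go to consecutive prefixes of $r'$. Dividing consecutive prefixes gives $\varphi_r(w_j)=\varphi_r(r_i)$.

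Since $w_j$ and $r_i$ avoid $a$ and $r$ involves $a$, the Freiheitssatz upgrades this to $w_j=r_i$ in $F$. A further rotation handles $j=0$. This gives $w=r'^k$ as words, hence $k=1$ because $w$ is not a proper power. Without this segment-by-segment comparison, your plan has no mechanism for turning the group-ring identity into an identity in $F$.
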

\begin{proof}
    Once again, let $\varphi_r : F \rightarrow F/\normclsr{r}$ be the natural quotient map, and let $a$ be the generator of $F$ which appears only positively in $w$.
    Let $r=q\lhd r'$ where $r'$ is partially positive by \autoref{lem:rispp} and the exponent sum of $a$ in $q$, which we denote $|q|_a$, is $0$ modulo $|r'|_a$.
    Such a $q$ must exist for every modular residue by the following argument:
    Suppose we have some $q$ with exponent sum $t$.
    Let $r_0a$ be the shortest prefix of $r'$ containing an $a$.
    Then
    \begin{align*}
        r & =qr'q^{-1} \\
          &= qr_0aa^{-1}r_0^{-1}r'r_0aa^{-1}r_0^{-1}q^{-1} \\
          &= qr_0a \lhd (a^{-1}r_0^{-1} \lhd r')
    \end{align*}
    $a^{-1}r_0^{-1} \lhd r'$ is partially positive, and $|qr_0a|_a=t+1$.
    Then by induction, every modular residue is possible.
    In the same way we can stipulate that $r'$ ends with $a$.
    If it does not, simply cycle it until it does.

    Let us assume without loss of generality that $w$ also ends in $a$.
    We will express $w$ and $r'$ as products:
    \begin{align*}
    w &= \prod_{i=0}^{nk-1} w_ia \\
    r' &= \prod_{i=0}^{n-1}r_ia
    \end{align*}
    where $n=|r|_a$ and each $w_i$ and $r_i$ does not contain $a$ (or $a^{-1}$).
    $|w|_a=nk$, because $w=r^kc$ for some $c\in \comm{\normclsr{r}}\subseteq \comm{F}$.
    Their Fox derivatives follow:
    \begin{align*}
        \fd{w}{a} &= \sum_{i=0}^{nk-1}w_0\prod_{j=1}^i aw_j 
        \\
        \fd{r'}{a} &= \sum_{i=0}^{n-1}r_0\prod_{j=1}^i ar_j 
    \end{align*}
    For brevity, we will let the $i$th term of the first sum be $\hat w_i$, and likewise $\hat r_i$ is the $i$th term of the second sum.

    We can derive the following relationship between the Fox derivatives of $w$ and $r'$:
    \begin{align*}
      \varphi_r\left(\fd{w}{a}\right)
      & = k\varphi_r\left(\fd{r}{a}\right) \tag{\autoref{lem:fox deriv of linton}} \\
      & = k\varphi_r\left(\fd{q\lhd r'}{a}\right) \\
      & = k\varphi_r\left(q\fd{r'}{a}\right) \tag{\autoref{lem:conjugation of Fox derivative}}
    \end{align*}
    Each application of the defining relation $r=1$ on some word changes its exponent sum on $a$ by $n$.
    Since $|\hat w_i|_a=|\hat r_i|_a = i$ and $|q|_a \equiv 0 \mod n$,
    $\varphi_r(\hat w_i)=\varphi_r(q\hat r_j)$ only if $i\equiv j \mod n$.
    Since there are only $n$ terms in $\fd{r'}a$, each has a unique exponent sum of $a$ modulo $n$.
    Thus $\varphi_r$ of each term in $\fd{r'}a$ must be unique.
    For each $i < n$, there must be exactly $k$ terms in $\varphi_r\left(\fd wa\right)$ equal to $\varphi_r\left(q\hat r_i\right)$.
    There are exactly $k$ terms $\hat w_j$ where $j$ has any particular modular residue, thus 
    \begin{equation*}\label{eqn:wjqri}
    \varphi_r(\hat w_j)=\varphi_r(q\hat r_i) \iff i \equiv j \mod n
    \end{equation*}

    Now consider $w_j$ and $r_i$ for some $1 \leq j \leq nk-1$ and $i\equiv j \mod n$.
    By definition, $\hat w_j = \hat w_{j-1}aw_j$; thus $w_j=a^{-1}\hat w_{j-1}^{-1}\hat w_j$.
    Similarly, $r_i=a^{-1}\hat r_{i-1}^{-1}\hat r_i$.
    Thus,
    \begin{align*}
    \varphi_r(w_j)
    &= \varphi_r(a^{-1}\hat w_{j-1}^{-1}\hat w_j) \\
    &= \varphi_r(a^{-1}(q\hat r_{i-1})^{-1}(q\hat r_i)) \\
    &= \varphi_r(a^{-1}\hat r_{i-1}^{-1}\hat r_i) \\
    &= \varphi_r(r_i)
    \end{align*}
    Since neither $w_j$ nor $r_i$ contain $a$ by definition, Freiheitssatz implies that $w_j=r_i$ when $0\neq j \equiv i \mod n$.
    It still remains to show that this is the case when $j=i=0$.
    To do this we will rotate $w$ by conjugating by $w_{nk-1}a=r_{n-1}a$.
    This will move $w_0$ into the second position, and then we will be able to use the above technique to show the desired claim.
    Naturally, since $w=r^kc$ for some $c\in \comm{\normclsr{r}}$, after conjugation we have $w_{nk-1}a\lhd w= (r_{n-1}a\lhd r)^k\tilde c$ for some $\tilde c\in \comm{\normclsr{r}}$.
    Recall $r=q\lhd r'$ where $r'$ ends with $a$ and $|q|_a\equiv 0\mod{|r|_a}$.
    Thus, letting $\tilde w=w_{nk-1}a\lhd w$, $\tilde r' = r_{n-1}a\lhd r'$, and $\tilde q = r_{n-1}a\lhd q$,
    \[
    \tilde w = (\tilde q\lhd \tilde r')^k\tilde c
    \]
    This satisfies all the necessary conditions.
    Specifically:
    \begin{itemize}
        \item $\tilde w \in (\tilde q\lhd \tilde r')^k\comm{\normclsr{\tilde r'}}$.
        \item $\tilde w$ is partially positive, since it is a rotation of $w$.
        \item $\tilde w$ ends with $a$.
        \item $\tilde r'$ is partially positive, since it is a rotation of $r'$.
        \item $\tilde r'$ ends with $a$.
        \item $|\tilde q|_a\equiv 0 \mod{|\tilde r|_a}$ since $\tilde q$ is conjugate to $q$ and $\tilde r$ is conjugate to $r$ and $|q|_a\equiv 0 \mod |r|_a$.
    \end{itemize}
    With these assumptions, the proof above shows that $w_0=r_0$.

    Since $w$ is not a proper power, $k=1$ and $w=r'$.
    $r'$ is conjugate to $r$ so $w$ is conjugate to $r$.
\end{proof}

Now we can prove our main result.

\begin{proof}[Proof of \autoref{thm:real main}]
    This follows from \autoref{lem:w=r}, \autoref{thm:linton main}, and \autoref{thm:linton torsion}
\end{proof}

\section{Further directions}

We have shown that for every partially positive one-relator group $G$ there is some $n$ such that $G^{(n)}$ is free.
We call $G$ ``free-by-$n$-solvable''.
This is an isomorphism invariant, and it has the potential to distinguish non-isomorphic partially positive one-relator groups.

\begin{question}
    When is a partially positive one-relator group free-by-$n$-solvable?
\end{question}

This question would be particularly compelling if partially positive words were very common.
While partially positive words greatly extend \hyperref[thm:Baumslag]{Baumslag's result on positive words}, we will see shortly that the asymptotic probability that a word is partially positive is 0.
That is in some sense, long partially positive words are very rare.

To show this we will introduce a concept.
\begin{definition}
    The {\it exponential growth rate} of a set of reduced words $S\subseteq F_r$, is
    \[
    \inf\{b\in \mathbb R : g(S)\in O(b^n)\}
    \]
    where $g(S)$ is the counting function of $S$:
    \[
        g(S) = n \mapsto |\{s\in S: |s| \leq n\}|
    \]
\end{definition}
The growth rate is closely related to the notion of topological entropy in symbolic dynamics.
Clearly if the growth rate of $S$ is less than $2r-1$ (the growth rate of $F_r$), then the probability of being in $S$ converges to 0 exponentially.

\begin{theorem}
    The set of partially positive words of length $n$ in the free group of rank $r$ has an exponential growth rate of $\sqrt{r^2-2r+2}+r-1 < 2r-1$.
\end{theorem}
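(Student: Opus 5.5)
The plan is to reduce the count to a single generator and then compute the growth rate with a transfer matrix. For each $a\in X$, let $S_a$ be the set of reduced words in which $a$ appears at least once and $a^{-1}$ never appears. The set of partially positive words is $\bigcup_{a\in X} S_a$. The $r$ sets $S_a$ are all equinumerous by permuting generators, so
\[
g(S_a)\le g\Bigl(\bigcup_a S_a\Bigr)\le r\,g(S_a).
\]
Hence the union has the same exponential growth rate as a single $S_a$, and it suffices to compute that rate.

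Next I would drop the condition that $a$ appears. Let $T_a$ be the set of reduced words over the alphabet $\{a\}\cup\{x^{\pm1}: x\in X\setminus\{a\}\}$, which has $2r-1$ letters. Then $S_a=T_a\setminus F_{X\setminus\{a\}}$. The free group on the remaining $r-1$ generators has growth rate $2r-3$. I will check at the end that this is strictly smaller than the rate of $T_a$, so removing it does not change the exponential rate. Since the word counts grow exponentially, counting words of length exactly $n$ or of length at most $n$ gives the same rate.

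To count words of length $n$ in $T_a$, I would aggregate states according to whether the last letter is $a$ or one of the other $2r-2$ letters. After an $a$, the next letter may be $a$ (one choice) or any of the $2r-2$ other letters, since $a^{-1}$ is never allowed and so no cancellation with $a$ can occur. After a letter $x^{\pm1}$ with $x\neq a$, the next letter may be $a$ (one choice) or any of the other $2r-3$ letters, excluding the inverse of the last letter. Writing $\alpha_n,\beta_n$ for the numbers of words of length $n$ ending in each class, this gives the linear recurrence with matrix
\[
M=\begin{pmatrix}1 & 1\\ 2r-2 & 2r-3\end{pmatrix}.
\]
Its characteristic polynomial is $\lambda^2-(2r-2)\lambda-1$, so the Perron eigenvalue is $\lambda=r-1+\sqrt{(r-1)^2+1}=\sqrt{r^2-2r+2}+r-1$. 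Since $M$ is a positive matrix and the initial vector is positive, the counts grow like $\Theta(\lambda^n)$, which gives both the upper and the lower bound on the growth rate.

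The final inequality $\lambda<2r-1$ is equivalent to $\sqrt{r^2-2r+2}<r$, that is, $r^2-2r+2<r^2$, which holds for $r>1$. Similarly $\lambda>2r-3$, because $\sqrt{(r-1)^2+1}>r-1\ge r-2$; this justifies discarding the words that avoid $a$. The only delicate step is making sure the transfer-matrix aggregation correctly encodes reducedness. This works because the sole forbidden transition out of an $a$ would be to $a^{-1}$, which is absent from the alphabet, so the two-state aggregation is exact.
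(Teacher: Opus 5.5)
Your proof is correct and follows the paper's argument. You use the same two-state transfer matrix (last letter $a$ versus not $a$), which agrees with the paper's matrix up to reordering and transposition and has characteristic polynomial $\lambda^2-(2r-2)\lambda-1$ and Perron root $r-1+\sqrt{r^2-2r+2}$. You also spell out steps the paper leaves implicit: the union over the choice of positive generator, discarding words that avoid $a$ (justified by $\lambda>2r-3$), the Perron--Frobenius $\Theta(\lambda^n)$ bound, and the inequality $\lambda<2r-1$.
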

\begin{proof}
    The exponential growth rate can be obtained by solving a recurrence relation.
    Let $a\in X_r$ be the positive generator.
    Since $a$ is the only letter with a special rule all other letters behave identically.
    An $a$ can follow any letter, $2r-2$ non-$a$ letters can follow an $a$, and $2r-3$ non-$a$ letters can follow a non-$a$ letter (every non-$a$ letter but its inverse).
    We can encode this as a matrix.
    \[
        \begin{bmatrix}
            2r-3 & 1 \\
            2r-2 & 1
        \end{bmatrix}
    \]
    The largest positive eigenvalue for this matrix is $\sqrt{r^2-2r+2}+r-1$ which gives us the exponential growth rate.
\end{proof}

While partially positive words have asymptotic probability 0, $\sqrt{r^2-2r+2}+r-1$ exceeds previous lower bounds for the growth rate of potentially positive words in $F_r$ when $r\geq 3$.
To our knowledge, this constitutes an improvement on the lower bound on the exponential growth rate of words relating a residually solvable group.

\begin{figure}
    \centering
    \def\size{8}
    \begin{tikzpicture}
        [ domain=1:\size
        ]
%

        \begin{axis}
            [ xmin=1
            , xmax=\size
            , ymin=1
            , legend style=
                { at={(1.02, 0.5)}
                , anchor=south west
                , align=left
                }
            , legend cell align={left}
            ]

        \addplot
            [ dashed
            , thick
            ]
            {2*\x-1};
        \addlegendentry[align=left]{All words}
        \addplot
            [ domain=1:\size
            , color=red
            , thick
            ]
            {\x};
        \addlegendentry{Positive words}
        \addplot
            [ domain=1:\size
            , color=cyan
            , thick
            ]
            {max(sqrt(2*\x-1),2*\x-3)};
        \addlegendentry{Primitive words}
        \addplot
            [ green
            , thick
            ]
            {sqrt(\x^2-2*\x+2)+\x-1};
        \addlegendentry{Partially pos.}
        \addplot
            [ mark=*
            , color=magenta
            , only marks
            ]
            coordinates
                { (2,2.505)
                };
        \addlegendentry{Potentially pos.}
        \addplot
            [ mark=halfcircle
            , color=orange
            , mark color=orange
            , only marks
            ]
            coordinates
                { (3,4.024)
                  (4,5.746)
                  (5,7.509)
                  (6,9.290)
                  (7,11.083)
                };
        \addlegendentry{Potentially pos.\\ (lower bound)}
        \end{axis}
    \end{tikzpicture}
    \caption{
    The exponential growth rates of various families of words which give residually solvable one-relator groups.
    }
    \label{fig:placeholder}
\end{figure}

\begin{definition}
    We call a word in $F_X$ potentially partially positive, or 3P, if it is automorphic to a partially positive word.
\end{definition}

\begin{example}
    The word $w=a^{-1}bac^{-1}a^2b^{-1}a^{-2}c^4\in F(a,b,c)$ is 3P but not partially positive.
\end{example}
\begin{proof}
    Consider the automorphism $\varphi = a\mapsto a;b\mapsto b;c\mapsto a^2ca$.
    $\varphi(w) = [b,c^{-1}](a^3c)^4$ which has only positive exponents on $a$.
\end{proof}

Naturally, we ask the same questions of 3P as have already been asked for potentially positive words:

\begin{question}
    Is there an algorithm to decide whether an input word is 3P?
\end{question}

\begin{question}
    What is the asymptotic probability that a word in $F_r$ is 3P?
    What is the exponential growth rate of a 3P word in $F_r$?
\end{question}

\begin{question}
    Is there a ``3P-blocking word'', that is a word which is not a subword of any cyclically reduced 3P word in $F_r$?
\end{question}

If the answer to this question is ``yes'', then the asymptotic probability that a word is 3P is $0$.

All of these questions have been answered for rank 2 potentially positive words, and since 3P is potential positivity in $F_2$, they are settled for 3P.

Which naturally leads us back to Shpilrain's question:

\begin{question}
    Are 3P and potentially positive the same?  
\end{question}

Our \hyperref[thm:real main]{main result} demonstrates that one cannot use \hyperref[thm:Baumslag]{Baumslag's result} to answer this question.

Both deciding 3P and potential positivity are special cases of a more general problem.
Positive words form a subsemigroup of the free group, and while all partially positive words do not form a subsemigroup, if we fix a generator $a$ and take the subset of partially positive words for which $a$ is the positive generator we get a subsemigroup whose automorphic closure is 3P.

\begin{problem}
    Given a subsemigroup $M\subseteq F_r$ and a word $w\in F_r$ determine whether $w$ is in the automorphic closure of $M$, $\Aut(F_r)M$.
\end{problem}

Of course, in general this problem is hopeless.
$M$ need not be finitely generated, and indeed the subsemigroup whose automorphic closure is 3P is not finitely generated for $r\geq 2$.
However, if we were able to decide the automorphic closure of the monoid $K$ generated by $X^\pm/\left\{a^{-1}\right\}$ in $F_X$, it would go a long way to deciding 3P.
$K$ contains the subsemigroup whose automorphic closure is 3P, and their difference is just words which do not contain $a$ or $a^{-1}$.
There are efficient algorithms for deciding whether a word is automorphic to a word not containing all the generators, and such words are asymptotically negligible.

\begin{question}
    Is there a finitely generated subsemigroup $M\subseteq F_r$ such that its automorphic closure in $F_r$ is not a decidable set?
\end{question}

We conjecture that the answer is ``yes''.

\begin{question}
    Is there a finitely generated subsemigroup $M\subseteq F_r$ such that $\mathbb P^\infty_r(M) = 0$ but $\mathbb P^\infty_r(\Aut(F_r)M) > 0$?
\end{question}

We conjecture that the answer is ``no''.

\section{Acknowledgment}
The authors would like to thank Carl-Fredrik Nyberg Brodda for pointing out errors in an early version of this manuscript.

\bibliography{refs}

\end{document}